\newtheorem{thm}{Theorem}
\newtheorem{prop}[thm]{Proposition}
\theoremstyle{remark}
\newcommand{\FF}{\mathbb{F}}
\newcommand{\ww}{\omega}
\newcommand{\vv}{\bar{\omega}}
\def\Aut{\mathop{\rm Aut}}
\begin{document}
\title{New self-dual additive $\FF_4$-codes constructed from circulant
graphs}

\author{
Markus Grassl\thanks{
Max-Planck-Institut f\"ur die Physik des Lichts,
Erlangen, Germany.
email: \protect\url{markus.grassl@mpl.mpg.de}} \and
Masaaki Harada\thanks{
Research Center for Pure and Applied Mathematics,
Graduate School of Information Sciences,
Tohoku University, Sendai 980--8579, Japan.
email: \protect\url{mharada@m.tohoku.ac.jp}.}
}

\maketitle

\begin{abstract}
In order to construct
quantum $[[n,0,d]]$ codes
for 
$(n,d)=(56,15)$, $(57,15)$, $(58,16)$, $(63,16)$, $(67,17)$,
$(70,18)$, $(71,18)$, $(79,19)$, $(83,20)$, 
$(87,20)$, $(89,21)$, $(95,20)$,
we construct self-dual additive $\FF_4$-codes of
length $n$ and minimum weight $d$ from circulant graphs.
The quantum codes with these parameters are constructed
for the first time.
\end{abstract}

\section{Introduction}

Let $\FF_4=\{ 0,1,\ww , \vv \}$ be the finite field with four
elements, where $\vv= \omega^2 = \omega +1$.  An {\em additive}
$\FF_4$-code of length $n$ is an additive subgroup of $\FF_4^n$.
An element of $C$ is called a {\em codeword} of $C$.  An additive
$(n,2^k)$ $\FF_4$-code is an additive $\FF_4$-code of length $n$ with
$2^k$ codewords.  The (Hamming) weight of a codeword $x$ of $C$ is the
number of non-zero components of $x$.  The minimum non-zero weight of
all codewords in $C$ is called the {\em minimum weight} of $C$.

Let $C$ be an additive $\FF_4$-code of length $n$.
The symplectic dual code $C^*$ of $C$ is defined as
$\{x \in \FF_4^n \mid x * y = 0 \text{ for all } y \in C\}$
under the trace inner product:
\[
x * y=\sum_{i=1}^n (x_iy_i^2+x_i^2y_i)
\]
for $x=(x_1,x_2,\ldots,x_n)$, $y=(y_1,y_2,\ldots,y_n) \in \FF_4^n$.
An additive $\FF_4$-code $C$ is called (symplectic) {\em
  self-orthogonal} (resp.\ {\em self-dual}) if $C \subset {C}^*$
(resp.\ $C = {C}^*$).

Calderbank, Rains, Shor and Sloane~\cite{CRSS} gave the following
useful method for constructing quantum codes from self-orthogonal
additive $\FF_4$-codes (see \cite{CRSS} for more details on quantum
codes).  A self-orthogonal additive $(n,2^{n-k})$ $\FF_4$-code $C$
such that there is no element of weight less than $d$ in $C^*\setminus
C$, gives a quantum $[[n,k,d]]$ code, where $k \ne 0$.  In addition, a
self-dual additive $\FF_4$-code of length $n$ and minimum weight $d$
gives a quantum $[[n,0,d]]$ code.  Let $d_{\max}(n,k)$ denote the
maximum integer $d$ such that a quantum $[[n,k,d]]$ code exists.  It
is a fundamental problem to determine the value $d_{\max}(n,k)$ for a
given $(n,k)$.  A table on $d_{\max}(n,k)$ is given in~\cite[Table
  III]{CRSS} for $n \le 30$, and an extended table is available
online~\cite{Grassl}.

In this note, we construct self-dual additive $\FF_4$-codes of
length $n$ and minimum weight $d$ for
\begin{multline}\label{eq:P}
(n,d)=(56,15),(57,15),(58,16),(63,16),(67,17),
\\
(70,18),(71,18),(79,19), (83,20), (87,20),(89,21),(95,20).
\end{multline}
These codes are obtained from adjacency matrices of some circulant
graphs.  The above self-dual additive $\FF_4$-codes allow us to construct
quantum $[[n,0,d]]$ codes for the $(n,d)$ given in \eqref{eq:P}.
These quantum codes improve the previously known lower bounds on
$d_{\max}(n,0)$ for the above $n$.

The data of these new quantum codes has already been included
in~\cite{Grassl}.  All computer calculations in this note were
performed using {\sc Magma}~\cite{Magma}.

\section{Self-dual additive $\FF_4$-codes from circulant graphs}

A {\em graph} $\Gamma$ consists of a finite set $V$ of vertices together with
a set of edges, where an edge is a subset of $V$ of cardinality $2$.
All graphs in this note are simple, that is, 
graphs are undirected without loops and multiple edges.
The {\em adjacency matrix} of a graph $\Gamma$ with 
$V=\{x_1,x_2,\ldots,x_v\}$ is a $v \times v$ matrix $A_\Gamma=(a_{ij})$, where 
$a_{ij}=a_{ji}=1$ if $\{x_i,x_j\}$ is an edge and $a_{ij}=0$ otherwise.
Let $\Gamma$ be a graph and let $A_\Gamma$ be the adjacency matrix of $\Gamma$.
Let $C(\Gamma)$ denote the additive 
$\FF_4$-code generated by the rows of $A_\Gamma+ \ww I$, where
$I$ denotes the identity matrix.
Then  $C(\Gamma)$ is a self-dual additive $\FF_4$-code~\cite{DP06}.

Two additive $\FF_4$-codes $C_1$ and $C_2$ of length $n$
are {\em equivalent}
if there is a map from $S_3^n\rtimes S_n$ sending $C_1$ onto $C_2$,
where the symmetric group $S_n$ acts on the set of the $n$ coordinates
and each copy of the the symmetric group $S_3$ permutes the non-zero
elements $1, \ww, \vv$ of the field in the respective coordinate. 
For any self-dual additive $\FF_4$-code $C$,
it was shown in~\cite[Theorem 6]{DP06} that there is a graph $\Gamma$
such that $C(\Gamma)$ is equivalent to $C$.
Using this characterization, all self-dual additive $\FF_4$-codes
were classified for lengths up to $12$~\cite[Section 5]{DP06}.

An  $n \times n$ matrix is {\em circulant} if
it has the following form:
\begin{equation}\label{eq:circulant}
M=\left( \begin{array}{ccccc}
r_1     &r_2     & \cdots &r_{n-1}&r_{n} \\
r_{n}&r_1     & \cdots &r_{n-2}&r_{n-1} \\
r_{n-1}&r_{n}& \ddots &r_{n-3}&r_{n-2} \\
\vdots  & \vdots &\ddots& \ddots & \vdots \\
r_2    &r_3    & \cdots&r_{n}&r_1
\end{array}
\right).
\end{equation}
Trivially, the matrix $M$ is fully determined by its first row
$(r_1,r_2,\ldots,r_{n})$.  A graph is called {\em circulant} if it has
a circulant adjacency matrix.  For a circulant adjacency matrix of the
form \eqref{eq:circulant}, we have
\begin{alignat}{5}
r_1=0\quad\text{and}\quad r_{i}=r_{n+2-i}\quad\text{for $i=2,\ldots,\lfloor n/2\rfloor$.}\label{eq:symmetry}
\end{alignat}
Circulant graphs and their applications have been widely studied
(see \cite{Monakhova} for a recent survey on this subject).  For
example, it is known that the number of non-isomorphic circulant graphs
is known for orders up to $47$ (see the sequence A049287
in~\cite{OEIS}).  In this note,
we concentrate on self-dual additive $\FF_4$-codes
$C(\Gamma)$ generated by the rows of $A_\Gamma+ \ww I$,
where $A_\Gamma$ are the adjacency matrices of circulant graphs
$\Gamma$.
These codes were studied, for example, in~\cite{LLMW} and~\cite{Var}.

\section{New self-dual additive $\FF_4$-codes and quantum codes from
circulant graphs}

\subsection{Lengths up to 50}

Throughout this section, let $\Gamma$ denote a circulant graph
with adjacency matrix $A_\Gamma$.
Let $C(\Gamma)$ denote the self-dual additive 
$\FF_4$-code generated by the rows of $A_\Gamma+ \ww I$.
Let $d_{\max}^\Gamma(n)$  denote
the maximum integer $d$ such that a self-dual 
additive $\FF_4$-code $C(\Gamma)$ of length $n$ and minimum weight
$d$ exists.
Varbanov~\cite{Var} gave a classification of
self-dual additive $\FF_4$-codes $C(\Gamma)$ 
for lengths $n=13,14,\ldots,29,31,32,33$ and 
determined the values $d_{\max}^\Gamma(n)$ 
for lengths up to $33$.

\begin{table}[thb]
\caption{Self-dual additive $\FF_4$-codes $C(\Gamma_n)$ 
of lengths $n=34,35,\ldots,50$}
\label{Tab:F4-50}
\begin{center}
{\footnotesize
\begin{tabular}{c|c|l|c} 
\noalign{\hrule height0.8pt}
\rule[-1.5ex]{0pt}{4ex}$n$ & $d_{\max}^\Gamma(n)$ &
\multicolumn{1}{c|}{Support of the first row of $A_{\Gamma_n}$}
& $d_{\max}(n,0)$ \\
\hline
34& 10 & 2, 3, 6, 8, 9, 27, 28, 30, 33, 34& 10--12\\
35& \emph{10} & 2, 4, 6, 7, 10, 27, 30, 31, 33, 35& 11--13\\
36& \emph{11} & 2, 3, 4, 5, 7, 9, 13, 14, 24, 25, 29, 31, 33, 34, 35, 36& 12--14\\
37& 11 & 5, 6, 7, 9, 11, 12, 27, 28, 30, 32, 33, 34& 11--14\\
38& 12 & 2, 3, 5, 7, 10, 11, 20, 29, 30, 33, 35, 37, 38& 12--14\\
39& 11 &  2, 4, 5, 6, 7, 10, 11, 30, 31, 34, 35, 36, 37, 39 & 11--14\\
40& 12 &  2, 3, 5, 8, 10, 21, 32, 34, 37, 39, 40 & 12--14\\
41&12&2, 3, 4, 5, 6, 10, 11, 13, 30, 32, 33, 37, 38, 39, 40, 41& 12--15\\
42&12&2, 3, 13, 15, 16, 18, 21, 22, 23, 26, 28, 29, 31, 41, 42& 12--16\\
43& \emph{12}&3, 4, 7, 9, 10, 12, 33, 35, 36, 38, 41, 42 & 13--16\\
44& 14& 4, 5, 8, 10, 13, 17, 18, 21, 23, 25, 28, 29, 33, 36, 38, 41, 42 
& 14--16\\
45& 13 & 2, 4, 5, 9, 10, 12, 14, 15, 17, 18, 20, 27, 29, 30, 32, 33, 35,
& 13--16\\
        &&  37, 38, 42, 43, 45 \\ 
46& 14 & 4, 5, 7, 8, 9, 10, 11, 12, 13, 14, 15, 17, 19, 24, 29, 31, 33, 
& 14--16\\
         &&34, 35, 36, 37, 38, 39, 40, 41, 43, 44 & \\
47&13&4, 8, 11, 13, 14, 15, 34, 35, 36, 38, 41, 45& 13--17\\
48&14&3, 4, 5, 10, 12, 14, 15, 16, 25, 34, 35, 36, 38, 40, 45, 46, 47
& 14--18\\
49&13&4, 5, 7, 8, 9, 10, 13, 14, 37, 38, 41, 42, 43, 44, 46, 47& 13--18\\
50&14&3, 7, 8, 9, 11, 12, 13, 17, 20, 22, 24, 25, 26, 27, 28, 30, 32, & 14--18\\
     &&35, 39, 40, 41, 43, 44, 45, 49 \\
\noalign{\hrule height0.8pt}
\end{tabular}
}
\end{center}
\end{table}

For lengths $n=13,14,\ldots, 50$, by exhaustive search, 
we determined the largest minimum weights $d_{\max}^\Gamma(n)$.
In Table~\ref{Tab:F4-50}, for lengths $n=34,35,\ldots,50$, we list
$d_{\max}^\Gamma(n)$ and an example of a self-dual additive
$\FF_4$-code $C(\Gamma_n)$ having minimum weight $d_{\max}^\Gamma(n)$,
where the support of the first row of the circulant adjacency matrix
$A_{\Gamma_n}$ is given.  Our present state of knowledge about the
upper bound $d_{\max}(n,0)$ on the minimum distance is also listed in
the table.  For most lengths, the self-dual additive $\FF_4$-codes give
quantum $[[n,0,d]]$ codes such that $d=d_{\max}(n,0)$ or $d$ attains
the currently known lower bound on $d_{\max}(n,0)$; 
three exceptions (lengths $35, 36$ and $43$) are
typeset in \emph{italics}.

\begin{table}[thb]
\caption{Weight distribution of $C(\Gamma_{36})$}
\label{Tab:WD36}
\begin{center}
{\footnotesize
\begin{tabular}{c|r||c|r||c|r||c|r} 
\noalign{\hrule height0.8pt}
$i$ & \multicolumn{1}{c||}{$A_i$} &
$i$ & \multicolumn{1}{c||}{$A_i$} &
$i$ & \multicolumn{1}{c||}{$A_i$} &
$i$ & \multicolumn{1}{c}{$A_i$} \\
\hline
0 &               1&17&      16\,145\,280&24&  5\,144\,050\,296&31&  3\,388\,554\,144\\
11&          1\,584&18&      51\,147\,440&25&  7\,408\,053\,504&32&  1\,588\,252\,581\\
12&          9\,936&19&     145\,391\,760&26&  9\,402\,473\,952&33&     577\,571\,712\\
13&         52\,992&20&     370\,815\,624&27& 10\,446\,604\,880&34&     152\,925\,552\\
14&        265\,392&21&     847\,669\,248&28& 10\,073\,332\,800&35&      26\,213\,616\\
15&     1\,168\,032&22&  1\,733\,647\,968&29&  8\,336\,897\,280&36&       2\,179\,688\\
16&     4\,578\,786&23&  3\,165\,414\,336&30&  5\,836\,058\,352&&\\
\noalign{\hrule height0.8pt}
\end{tabular}
}
\end{center}
\end{table}

Note that $d_{\max}^\Gamma(36)=11$.  
For lengths $34,35$ and $36$, 
self-dual additive $\FF_4$-codes $C(\Gamma)$ with minimum weight $10$
were constructed in~\cite{Var}.
For length $36$, we found  a self-dual additive
$\FF_4$-code $C(\Gamma_{36})$ 
of length $36$ and minimum weight $11$ (see Table~\ref{Tab:F4-50}).
The weight distribution of the code $C(\Gamma_{36})$ is listed 
in Table~\ref{Tab:WD36}, where $A_i$ denotes the number of
codewords of weight $i$.

\begin{prop}
The largest minimum weight $d_{\max}^\Gamma(36)$ among all self-dual
additive $\FF_4$-codes $C(\Gamma)$ of length $36$ from circulant
graphs is $11$.
\end{prop}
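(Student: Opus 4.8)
The plan is to reduce the statement to a finite computation and then argue that the search was exhaustive. First I would recall from \eqref{eq:symmetry} that a circulant adjacency matrix of order $n=36$ is completely determined by its first row $(r_1,\dots,r_{36})$, which satisfies $r_1=0$ and $r_i=r_{36+2-i}$ for $i=2,\dots,18$; equivalently, the support $S\subseteq\{2,\dots,36\}$ of the first row is a union of "symmetric pairs" $\{i,38-i\}$ together with possibly the fixed point $i=19$. Hence the circulant graphs on $36$ vertices are indexed by subsets of the $18$-element set of orbits of the map $i\mapsto 38-i$ on $\{2,\dots,36\}$, giving $2^{18}$ candidate graphs $\Gamma$, each yielding a self-dual additive $\FF_4$-code $C(\Gamma)$ by the result of~\cite{DP06} quoted above.

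Next I would describe the exhaustive search itself. For each of the $2^{18}$ first rows I would form $A_\Gamma+\ww I$, generate $C(\Gamma)$, and compute its minimum weight in \textsc{Magma}. The entry in Table~\ref{Tab:F4-50} exhibits one circulant graph $\Gamma_{36}$ whose code has minimum weight $11$, and Table~\ref{Tab:WD36} records the full weight distribution of $C(\Gamma_{36})$, confirming $A_i=0$ for $1\le i\le 10$ and $A_{11}=1584>0$; this establishes $d_{\max}^\Gamma(36)\ge 11$. The search then certifies that \emph{no} first row produces a code of minimum weight $\ge 12$, i.e.\ every $C(\Gamma)$ of length $36$ contains a nonzero codeword of weight at most $11$, which gives $d_{\max}^\Gamma(36)\le 11$. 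Combining the two inequalities yields $d_{\max}^\Gamma(36)=11$.

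The only real obstacle is the feasibility of the search: $2^{18}\approx 2.6\times 10^5$ codes, each of size $2^{36}$, so the minimum weight must be computed without enumerating all codewords. This is handled by standard minimum-weight algorithms for additive codes (e.g.\ a Brouwer--Zimmermann style information-set approach, as implemented in \textsc{Magma}), and in practice one can prune aggressively: as soon as a codeword of weight $\le 11$ is found for a given $\Gamma$, that graph is discarded, so the expensive certified minimum-weight computation is only needed for the handful of graphs whose codes have large minimum weight. One should also note that $S$ and its "complement within a shift" or other symmetries (cyclic rotation of the support, multiplication of indices by a unit mod $36$) give equivalences among the $C(\Gamma)$, which can be used to cut the search space by a constant factor, but this is an optimization rather than a logical necessity. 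With these reductions the computation is routine, and the proposition follows.
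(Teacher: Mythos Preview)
Your proposal is correct and matches the paper's approach: the proposition is established by an exhaustive computer search over the $2^{18}$ circulant graphs on $36$ vertices, with the lower bound witnessed by the explicit $\Gamma_{36}$ in Table~\ref{Tab:F4-50} (and its weight distribution in Table~\ref{Tab:WD36}) and the upper bound certified by the absence of any $C(\Gamma)$ with minimum weight $\ge 12$. The paper does not spell out the details of the search or the pruning/symmetry reductions you mention, so your write-up is if anything more explicit than the original.
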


A self-dual additive $\FF_4$-code is called
Type~II if it is even.  
It is known that a Type~II additive $\FF_4$-code must
have even length.
A self-dual additive $\FF_4$-code, which is not Type~II,
is called Type~I\@.
Although the following proposition is somewhat trivial, 
we give a proof for completeness.

\begin{prop}\label{prop}
Let $C(\Gamma)$ be the self-dual additive 
$\FF_4$-code of even length $n$ 
generated by the rows of $A_\Gamma+ \ww I$,
where $A_\Gamma$ is circulant.
Let $S$ be the support of the first row of $A_\Gamma$.
Then $C(\Gamma)$ is Type~II if and only if $n/2+1 \in S$.
\end{prop}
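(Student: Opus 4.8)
The plan is to show that $C(\Gamma)$ is even if and only if each of its generating rows has even weight, and then to read that condition off from the circulant structure.

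The main tool is a ``mod-$2$ additivity'' of the Hamming weight relative to the trace form. For $a,b\in\FF_4$ the element $ab^2+a^2b=ab(a+b)$ lies in $\FF_2$, and a four-case check gives
\[
[a+b\neq 0]\;\equiv\;[a\neq 0]+[b\neq 0]+(ab^2+a^2b)\pmod 2 ,
\]
the point being that the correction term $ab(a+b)$ is nonzero exactly when $a$ and $b$ are both nonzero and distinct, which is precisely the situation in which the parities of $[a+b\neq 0]$ and $[a\neq 0]+[b\neq 0]$ differ (in all other cases both sides vanish). Summing over the $n$ coordinates — and noting that $x*y$, being a sum of elements of $\FF_2$, lies in $\FF_2$ — yields
\[
\mathrm{wt}(x+y)\;\equiv\;\mathrm{wt}(x)+\mathrm{wt}(y)+x*y\pmod 2\qquad\text{for all }x,y\in\FF_4^n .
\]

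Write $g_1,\dots,g_n$ for the rows of $A_\Gamma+\ww I$. Iterating the last congruence gives, for every $T\subseteq\{1,\dots,n\}$,
\[
\mathrm{wt}\Bigl(\sum_{i\in T}g_i\Bigr)\;\equiv\;\sum_{i\in T}\mathrm{wt}(g_i)+\sum_{\{i,j\}\subseteq T}g_i*g_j\pmod 2 .
\]
Since $C(\Gamma)$ is self-dual it is self-orthogonal, so $g_i*g_j=0$ for all $i,j$ and the double sum vanishes. As every codeword of $C(\Gamma)$ is of the form $\sum_{i\in T}g_i$ and each $g_i$ is itself a codeword, it follows that $C(\Gamma)$ is Type~II if and only if $\mathrm{wt}(g_i)$ is even for every $i$. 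Because $\Gamma$ has no loops, the diagonal entry $\ww$ of row $i$ of $A_\Gamma+\ww I$ lies outside the support of row $i$ of $A_\Gamma$, and because $\Gamma$ is circulant it is regular of degree $|S|$; hence $\mathrm{wt}(g_i)=|S|+1$ for all $i$. Therefore $C(\Gamma)$ is Type~II if and only if $|S|$ is odd.

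It remains to check that $|S|$ is odd exactly when $n/2+1\in S$. As $n$ is even, the involution $i\mapsto n+2-i$ from \eqref{eq:symmetry} has the single fixed point $n/2+1$ and partitions $\{2,3,\dots,n\}\setminus\{n/2+1\}$ into pairs $\{i,n+2-i\}$; by \eqref{eq:symmetry}, and since $1\notin S$, each such pair is contained in $S$ or disjoint from $S$. Hence $|S|=2p+[\,n/2+1\in S\,]$, where $p$ is the number of such pairs contained in $S$, so $|S|$ is odd precisely when $n/2+1\in S$. Combined with the previous paragraph, this proves the proposition. The only step that needs an idea is the first congruence; after that, self-orthogonality of $C(\Gamma)$ automatically removes all interaction terms, and everything else is routine bookkeeping with the regularity and the palindromic first row of a circulant graph.
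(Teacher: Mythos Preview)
Your proof is correct, but it follows a different route from the paper's. The paper simply invokes~\cite[Theorem~15]{DP06}, which states that $C(\Gamma)$ is Type~II if and only if every vertex of $\Gamma$ has odd degree, and then applies the same parity argument via~\eqref{eq:symmetry} that you give in your final paragraph. You instead re-derive the cited result from scratch: you establish the congruence $\mathrm{wt}(x+y)\equiv\mathrm{wt}(x)+\mathrm{wt}(y)+x*y\pmod 2$, use self-orthogonality to kill the cross terms, and conclude that evenness of the code is equivalent to evenness of each generator weight $|S|+1$. The advantage of your approach is that it is entirely self-contained and makes explicit why self-duality (rather than the mere graph structure) forces the reduction to generator weights; the paper's approach is shorter but relies on an external reference for the key step.
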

\begin{proof}
It was shown in~\cite[Theorem~15]{DP06} that the codes $C(\Gamma)$ are
Type~II if and only if all the vertices of $\Gamma$ have odd degree.
For a circulant graph $\Gamma$, the degree of the vertices is constant
and equals the size of the support $S$ of the first row of $A_\Gamma$.
From \eqref{eq:symmetry} it follows that the size of the support $S$
is odd if and only if $r_{n/2+1}=1$, i.e., $n/2+1\in S$.
\end{proof}
Note that \eqref{eq:symmetry} also implies that the size of the
support $S$ of the first row of $A_\gamma$ is always even when $n$ is
odd, i.e., self-dual codes of odd length from circulant graphs cannot
be Type~II.

By Proposition~\ref{prop}, the codes $C(\Gamma_n)$
$(n=38,40,42,44,46,48,50)$ are Type~II\@.
In addition, the other codes in Table~\ref{Tab:F4-50} are Type~I\@.
Let $d_{\max,I}^\Gamma(n)$  denote
the maximum integer $d$ such that a Type~I 
additive $\FF_4$-code $C(\Gamma)$ of length $n$ and minimum weight
$d$ exists.
By exhaustive search, we verified that 
$d_{\max,I}^\Gamma(44)=d_{\max}^\Gamma(44)-2$,
$d_{\max,I}^\Gamma(n)=d_{\max}^\Gamma(n)-1$ 
$(n=38,40,46,48)$ and
$d_{\max,I}^\Gamma(n)=d_{\max}^\Gamma(n)$ $(n=42,50)$.
For $(n,d)=(42,12)$ and $(50,14)$, we list an example of Type~I 
additive $\FF_4$-code $C(\Gamma'_n)$ of length $n$ and minimum weight $d$,
where the support of the first row of
the circulant adjacency matrix $A_{\Gamma'_n}$ is given 
in Table~\ref{Tab:F4-I}.

\begin{table}[thb]
\caption{Type~I additive $\FF_4$-codes $C(\Gamma'_n)$ of lengths $42,50$}
\label{Tab:F4-I}
\begin{center}
{\footnotesize
\begin{tabular}{c|c|l} 
\noalign{\hrule height0.8pt}
\rule[-1.5ex]{0pt}{4ex}$n$ & $d$ &
\multicolumn{1}{c}{Support of the first row of $A_{\Gamma'_n}$} \\
\hline
42& 12 & 2, 3, 5, 6, 8, 11, 12, 13, 31, 32, 33, 36, 38, 39, 41, 42\\
50&14& 5, 6, 7, 9, 10, 11, 12, 20, 32, 40, 41, 42, 43, 45, 46, 47\\
\noalign{\hrule height0.8pt}
\end{tabular}
}
\end{center}
\end{table}

\subsection{Sporadic lengths $n\ge 51$}

For lengths $n \ge 51$, by non-exhaustive search, we tried to find
self-dual additive $\FF_4$-codes $C(\Gamma)$ with large minimum
weight, where $\Gamma$ is a circulant graph.  By this method, we found
new self-dual additive $\FF_4$-codes $C(\Gamma_n)$ of length $n$ and
minimum weight $d$ for
\begin{multline*}
(n,d)=(56,15),(57,15),(58,16),(63,16),(67,17),
\\
(70,18),(71,18),(79,19), (83,20), (87,20),(89,21),(95,20).
\end{multline*}
For each self-dual additive $\FF_4$-code $C(\Gamma_n)$, the support of
the first row of the circulant adjacency matrix $A_{\Gamma_n}$ is
listed in Table~\ref{Tab:F4}.  Additionally, for $n=51,\ldots, 55, 59,
60, 64, 65, 66, 69, 72, \ldots, 78, 81, 82, 84, 88, 94, 100$, we found
self-dual additive $\FF_4$-codes $C(\Gamma_n)$ from circulant graphs matching the known
lower bound on the minimum distance of quantum codes $[[n,0,d]]$.
For the remaining lengths, 
our non-exhaustive computer search failed to discover a 
self-dual additive $\FF_4$-code from a circulant graph
matching the known lower bound.

\begin{table}[thbp]
\caption{New self-dual additive $\FF_4$-codes $C(\Gamma_n)$}
\label{Tab:F4}
\begin{center}
{\small
\begin{tabular}{c|l} 
\noalign{\hrule height0.8pt}
\rule[-1.5ex]{0pt}{4ex}Code & \multicolumn{1}{c}{Support of the first row of $A_{\Gamma_n}$}\\
\hline
$C(\Gamma_{56})$ &
2, 3, 7, 8, 12, 14, 15, 16, 17, 20, 22, 26, 28, 30, 32, 36, 38, 41, 42, 43, \\
&
44, 46, 50, 51, 55, 56
\\ \hline  
$C(\Gamma_{57})$ &
7, 8, 10, 12, 17, 18, 22, 23, 24, 35, 36, 37, 41, 42, 47, 49, 51, 52
\\ \hline  
$C(\Gamma_{58})$ &
2, 3, 7, 10, 13, 14, 15, 17, 21, 25, 27, 29, 30, 31, 33, 35, 39, 43, 45, \\
&
46, 47, 50, 53, 57, 58
\\ \hline  
$C(\Gamma_{63})$ &
2, 5, 6, 9, 13, 14, 15, 16, 17, 19, 46, 48, 49, 50, 51, 52, 56, 59, 60, 63
\\ \hline  
$C(\Gamma_{67})$ &
4, 5, 6, 11, 12, 14, 15, 16, 17, 18, 21, 25, 26, 27, 28, 30, 39, 41, 42, \\
&
43, 44, 48, 51, 52, 53, 54, 55, 57, 58, 63, 64, 65 
\\ \hline  
$C(\Gamma_{70})$ &
2, 6, 7, 8, 11, 12, 13, 14, 15, 17, 19, 20, 21, 22, 23, 24, 28, 29, 30, 32,\\
&
33, 35, 36, 37, 39, 40, 42, 43, 44, 48, 49, 50, 51, 52, 53, 55, 57, 58, 59,\\
&
60, 61, 64, 65, 66, 70
\\ \hline  
$C(\Gamma_{71})$ &
2, 3, 5, 11, 12, 15, 17, 20, 23, 26, 27, 28, 31, 34, 35, 38, 39, 42, 45, 46,\\
&
47, 50, 53, 56, 58, 61, 62, 68, 70, 71
\\ \hline  
$C(\Gamma_{79})$ &
2, 4, 7, 10, 13, 15, 18, 19, 20, 21, 23, 24, 25, 29, 30, 31, 32, 35, 36, 37, \\
&
39, 42, 44, 45, 46, 49, 50, 51, 52, 56, 57, 58, 60, 61, 62, 63, 66, 68, 71, \\
&
74, 77, 79
\\ \hline  
$C(\Gamma_{83})$ &
3, 4, 5, 7, 9, 11, 14, 19, 20, 21, 22, 23, 24, 27, 28, 30, 31, 32, 33, 34,\\
& 36, 38, 41, 44, 47, 49, 51, 52, 53, 54, 55, 57, 58, 61, 62, 63, 64, 65, 66,\\
& 71, 74, 76, 78, 80, 81, 82
\\ \hline  
$C(\Gamma_{87})$ &
7, 11, 12, 13, 14, 15, 20, 23, 24, 25, 27, 28, 29, 30, 31, 34, 35, 37, 40,\\ 
&
41, 42, 47, 48, 49, 52, 54, 55, 58, 59, 60, 61, 62, 64, 65, 66, 69, 74, 75,\\
&
76, 77, 78, 82
\\ \hline  
$C(\Gamma_{89})$ &
3, 4, 7, 10, 14, 15, 18, 19, 21, 23, 25, 26, 30, 32, 34, 35, 37, 39, 40,\\
& 45, 46, 51, 52, 54, 56, 57, 59, 61, 65, 66, 68, 70, 72, 73, 76, 77, 81,\\
& 84, 87, 88
\\ \hline  
$C(\Gamma_{95})$ &
4, 5, 6, 11, 12, 14, 15, 18, 19, 26, 27, 28, 30, 31, 32, 33, 34, 35, 36,\\
& 38, 40, 42, 43, 45, 47, 50, 52, 54, 55, 57, 59, 61, 62, 63, 64, 65, 66,\\
& 67, 69, 70, 71, 78, 79, 82, 83, 85, 86, 91, 92, 93\\
\noalign{\hrule height0.8pt}
\end{tabular}
}
\end{center}
\end{table}

\begin{table}[thb]
\caption{Number $A_i$ of codewords of weight $i$ $(i=15,16,\ldots,19)$}
\label{Tab:WD}
\begin{center}
{\small
\begin{tabular}{c|c|r|r|r|r|r} 
\noalign{\hrule height0.8pt}
Code & $d$
& \multicolumn{1}{c|}{$A_{15}$}
& \multicolumn{1}{c|}{$A_{16}$}
& \multicolumn{1}{c|}{$A_{17}$}
& \multicolumn{1}{c|}{$A_{18}$}
& \multicolumn{1}{c}{$A_{19}$} \\
\hline
$C(\Gamma_{56})$ & 15& 4\,032 & 25\,508 & 173\,264 & 1\,124\,648 & 6\,839\,224 \\
$C(\Gamma_{57})$ & 15& 1\,938 &18\,126 &120\,783 & 838\,451 & 5\,093\,409 \\
$C(\Gamma_{58})$ & 16& & 24\,882 & 0 & 1\,205\,240 & 0 \\
$C(\Gamma_{63})$ & 16& & 2\,142 &12\,726 &113\,568&757\,575\\
$C(\Gamma_{67})$ & 17& & & 2\,278 &23\,785 &193\,429 \\
$C(\Gamma_{70})$ & 18& & &  &15\,260  & 0 \\
$C(\Gamma_{71})$ & 18& & &  & 6\,745  &43\,949 \\
$C(\Gamma_{79})$ & 19& & &  & & 1\,343  \\
\noalign{\hrule height0.8pt}
\end{tabular}
}
\end{center}
\end{table}

For the codes $C(\Gamma_{n})$ 
$(n=56,57,58,63,67,70,71,79)$,
we give in Table~\ref{Tab:WD} part of the weight distribution.
Due to the computational complexity, 
we calculated the number $A_i$ of codewords of weight $i$
for only $i=15,16,\ldots,19$.
As some basic properties of the graphs $\Gamma_n$, we give in
Table~\ref{Tab:G} the valency $k(\Gamma_n)$, the diameter
$d(\Gamma_n)$, the girth $g(\Gamma_n)$, the size $\ww(\Gamma_n)$ of
the maximum clique and the order $|\Aut(\Gamma_n)|$ of the
automorphism group.  With the exception of $n=53$, the automorphism
group is the dihedral group on $n$ points of order $2n$.  Note,
however, that the notion of equivalence for graphs and codes are
different, i.\,e., the graph invariants are not preserved with respect
to code equivalence \cite{BCGJWZ}.  
By Proposition~\ref{prop}, 
the codes $C(\Gamma_{58})$ and $C(\Gamma_{70})$ are Type~II\@.


\begin{table}[thbp]
\caption{Properties of the graphs $\Gamma_n$}
\label{Tab:G}
\medskip

\centerline{\small
\begin{tabular}{c|c|c|c|c|c|c} 
\noalign{\hrule height0.8pt}
\rule[-1.5ex]{0pt}{4ex}Graph & 
$d_{\text{min}}(C(\Gamma_n))$ &$k(\Gamma_n)$& $d(\Gamma_n)$&$g(\Gamma_n)$&$\ww(\Gamma_n)$&$|\Aut(\Gamma_n)|$\\
\hline
$\Gamma_{51}$ &  14 & 24 & 2 & 3 & 6 & 102 \\
$\Gamma_{52}$ &  14 & 16 & 3 & 3 & 4 & 104 \\
$\Gamma_{53}$ &  15 & 26 & 2 & 3 & 5 & 1378 \\
$\Gamma_{54}$ &  16 & 29 & 2 & 3 & 8 & 108 \\
$\Gamma_{55}$ &  14 & 14 & 3 & 3 & 4 & 110 \\
$\Gamma_{56}$ &  15 & 26 & 2 & 3 & 19 & 112 \\
$\Gamma_{57}$ &  15 & 18 & 2 & 3 & 5 & 114 \\
$\Gamma_{58}$ &  16 & 25 & 2 & 3 & 7 & 116 \\
$\Gamma_{59}$ &  15 & 30 & 2 & 3 & 8 & 118 \\
$\Gamma_{60}$ &  16 & 31 & 2 & 3 & 6 & 120 \\
$\Gamma_{63}$ &  16 & 20 & 2 & 3 & 5 & 126 \\
$\Gamma_{64}$ &  16 & 43 & 2 & 3 & 12 & 128 \\
$\Gamma_{65}$ &  16 & 28 & 2 & 3 & 6 & 130 \\
$\Gamma_{66}$ &  16 & 33 & 2 & 3 & 6 & 132 \\
$\Gamma_{67}$ &  17 & 32 & 2 & 3 & 6 & 134 \\
$\Gamma_{69}$ &  17 & 38 & 2 & 3 & 7 & 138 \\
$\Gamma_{70}$ &  18 & 45 & 2 & 3 & 10 & 140 \\
$\Gamma_{71}$ &  18 & 30 & 2 & 3 & 6 & 142 \\
$\Gamma_{72}$ &  18 & 27 & 2 & 3 & 6 & 144 \\
$\Gamma_{73}$ &  18 & 40 & 2 & 3 & 8 & 146 \\
$\Gamma_{74}$ &  18 & 32 & 2 & 3 & 6 & 148 \\
$\Gamma_{75}$ &  18 & 34 & 2 & 3 & 6 & 150 \\
$\Gamma_{76}$ &  18 & 37 & 2 & 3 & 8 & 152 \\
$\Gamma_{77}$ &  18 & 48 & 2 & 3 & 10 & 154 \\
$\Gamma_{78}$ &  18 & 35 & 2 & 3 & 7 & 156 \\
$\Gamma_{79}$ &  19 & 42 & 2 & 3 & 8 & 158 \\
$\Gamma_{81}$ &  19 & 40 & 2 & 3 & 7 & 162 \\
$\Gamma_{82}$ &  20 & 43 & 2 & 3 & 7 & 164 \\
$\Gamma_{83}$ &  20 & 46 & 2 & 3 & 9 & 166 \\
$\Gamma_{84}$ &  20 & 25 & 2 & 3 & 6 & 168 \\
$\Gamma_{87}$ &  20 & 42 & 2 & 3 & 7 & 174 \\
$\Gamma_{88}$ &  20 & 37 & 2 & 3 & 6 & 176 \\
$\Gamma_{89}$ &  21 & 40 & 2 & 3 & 6 & 178 \\
$\Gamma_{94}$ &  20 & 44 & 2 & 3 & 10 & 188 \\
$\Gamma_{95}$ &  20 & 50 & 2 & 3 & 7 & 190 \\
$\Gamma_{100}$ & 20 & 48 & 2 & 3 & 7 & 200 \\
\noalign{\hrule height0.8pt}
\end{tabular}
}
\end{table}

Finally, by the method in~\cite{CRSS}, the existence of 
our self-dual additive $\FF_4$-codes $C(\Gamma_n)$
yields the following:

\begin{thm}
There are a quantum $[[n,0,d]]$ codes for
\begin{multline*}
(n,d)=(56,15),(57,15),(58,16),(63,16),(67,17),
\\
(70,18),(71,18),(79,19), (83,20), (87,20),(89,21),(95,20).
\end{multline*}
\end{thm}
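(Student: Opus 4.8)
The plan is to exhibit, for each pair $(n,d)$ in the list, an explicit self-dual additive $\FF_4$-code of length $n$ and minimum weight $d$, and then to invoke the construction of Calderbank, Rains, Shor and Sloane~\cite{CRSS} recalled in the introduction, by which such a code yields a quantum $[[n,0,d]]$ code. Since the relevant codes are exactly the $C(\Gamma_n)$ tabulated in Table~\ref{Tab:F4}, the proof reduces to checking that each of these has the claimed minimum weight and then quoting the two black-box results already available to us: the self-duality statement of~\cite{DP06} and the quantum construction of~\cite{CRSS}.

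First, for each $n$ occurring in the list, I would form the circulant graph $\Gamma_n$ whose adjacency matrix $A_{\Gamma_n}$ has, as the support of its first row, the subset of $\{1,\dots,n\}$ listed in Table~\ref{Tab:F4}. By~\eqref{eq:symmetry} this datum indeed defines a symmetric $0$-$1$ matrix with zero diagonal, hence a genuine simple graph, so $C(\Gamma_n)$ — the additive $\FF_4$-code generated by the rows of $A_{\Gamma_n}+\ww I$ — is well defined, and by~\cite{DP06} it is self-dual; in particular it has $2^n$ codewords and $C(\Gamma_n)=C(\Gamma_n)^*$.

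Second, I would verify that the minimum weight of $C(\Gamma_n)$ is at least $d$ (and in fact equals $d$, as recorded in Tables~\ref{Tab:WD} and~\ref{Tab:G}). This is the computational core of the argument and the step I expect to be the main obstacle: the codes have up to $2^{95}$ codewords, so naive enumeration is out of the question. Instead one runs standard minimum-distance computations for additive $\FF_4$-codes — as implemented in {\sc Magma}~\cite{Magma} — exploiting the cyclic structure of the generator matrix and the large automorphism group of $C(\Gamma_n)$ (the dihedral group of order $2n$ for all $n\ne 53$, cf.\ Table~\ref{Tab:G}) to keep the search feasible. The partial weight distributions in Table~\ref{Tab:WD}, obtained by the same machinery, provide an independent consistency check confirming that no codeword of weight below $d$ occurs.

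Finally, applying the quantum construction of~\cite{CRSS} to each self-dual additive $\FF_4$-code $C(\Gamma_n)$ of length $n$ and minimum weight $d$ produces a quantum $[[n,0,d]]$ code. Ranging over all twelve pairs $(n,d)$ in the statement then establishes the theorem. The only non-routine ingredient is the minimum-weight verification of the second step; everything else is a direct appeal to the cited results.
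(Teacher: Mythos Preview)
Your proposal is correct and matches the paper's approach exactly: the paper presents the theorem as an immediate consequence of the self-dual additive $\FF_4$-codes $C(\Gamma_n)$ from Table~\ref{Tab:F4} (whose minimum weights were verified by {\sc Magma}) together with the construction of~\cite{CRSS}. If anything, your write-up is more explicit than the paper's, which simply prefaces the theorem with ``by the method in~\cite{CRSS}, the existence of our self-dual additive $\FF_4$-codes $C(\Gamma_n)$ yields the following'' and gives no separate proof.
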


The above quantum $[[n,0,d]]$ codes
improve the previously known lower bounds on 
$d_{\max}(n,0)$ ($n=56,57,58,63,67,70,71,79,87,89$).
More precisely, we give
our present state of knowledge about 
$d_{\max}(n,0)$~\cite{Grassl}:
\begin{align*}
& 15 \le d_{\max}(56,0) \le 20,\quad
  15 \le d_{\max}(57,0) \le 20,\\
& 16 \le d_{\max}(58,0) \le 20,\quad
  16 \le d_{\max}(63,0) \le 22,\\
& 17 \le d_{\max}(67,0) \le 24,\quad
  18 \le d_{\max}(70,0) \le 24,\\
& 18 \le d_{\max}(71,0) \le 25,\quad
  19 \le d_{\max}(79,0) \le 28,\\
& 20 \le d_{\max}(83,0) \le 29,\quad
  20 \le d_{\max}(87,0) \le 30,\\
& 21 \le d_{\max}(89,0) \le 31,\quad
  20 \le d_{\max}(95,0) \le 33.\\
\end{align*}

\bigskip
\noindent
{\bf Acknowledgment.}
The authors would like to thank the anonymous referees
for helpful comments.
This work is supported by JSPS KAKENHI Grant Number 15H03633.




\begin{thebibliography}{9}
\bibitem{Magma}W. Bosma, J. Cannon and C. Playoust, 
The Magma algebra system I: The user language, 
{\sl J. Symbolic Comput.}
{\bf 24} (1997), 235--265.

\bibitem{BCGJWZ} S. Beigi, J. Chen, M. Grassl, Z. Ji, Q. Wang and B. Zeng,
Symmetries of codeword stabilized quantum codes,
{\sl  Proceedings 8th Conference on the Theory of Quantum Computation, Communication and Crypto\-graphy (TQC 2013)},
Guelph, Canada, May 2013, pp. 192--206,
preprint arXiv:1303.7020 [quant-ph].

\bibitem{CRSS}A. R. Calderbank, E. M. Rains, P. W. Shor and N. J. A. Sloane,
Quantum error correction via codes over ${\rm GF}(4)$,
{\sl IEEE Trans.\ Inform.\ Theory}
{\bf 44}  (1998),  1369--1387.

\bibitem{DP06}L. E. Danielsen and M. G. Parker,
On the classification of all self-dual additive codes
over ${\rm GF}(4)$ of length up to $12$,
{\sl J. Combin.\ Theory Ser.~A}
{\bf 113} (2006), 1351--1367.

\bibitem{Grassl} M. Grassl,
Bounds on the minimum distance of linear codes and quantum codes,
Online available at \url{http://www.codetables.de},
Accessed on 2015-09-15.


\bibitem{LLMW}R. Li, X. Li, Y. Mao and M. Wei,
Additive codes over $GF(4)$ from circulant graphs,
preprint, arXiv:1403.7933.


\bibitem{Monakhova} E. A. Monakhova,
A survey on undirected circulant graphs,
{\sl Discrete Math.\ Algorithms Appl.}
{\bf 4} (2012), 1250002 (30 pages). 

\bibitem{OEIS}
The OEIS Foundation Inc., 
The on-line encyclopedia of integer sequences, 
Online available at \url{https://oeis.org},
Accessed on 2015-06-25.

\bibitem{Var}Z. Varbanov, 
Additive circulant graph codes over GF($4$),
{\sl Math.\ Maced.}
{\bf 6}  (2008), 73--79. 

\end{thebibliography}
\end{document}